\documentclass{cimart}

\usepackage{epstopdf}

\title{
   Partial-dual polynomial as a framed weight system
    }

\author{
    Qingying Deng, Fengming Dong, Xian'an Jin and Qi Yan
    }

\authorinfo[
    Q. Deng]{
    School of Mathematics and Computational Science, Xiangtan University, P. R. China}{%
    qingying@xtu.edu.cn
    }

\authorinfo[
    F. Dong]{
    National Institute of Education, Nanyang Technological University, Singapore}{%
    fengming.dong@nie.edu.sg, donggraph@163.com
    }

\authorinfo[
    X. Jin]{
    School of Mathematical Sciences, Xiamen University, P. R. China}{%
    xajin@xmu.edu.cn
    }

\authorinfo[
    Q. Yan (Corresponding author)]{
    School of Mathematics and Statistics, Lanzhou University, P. R. China}{%
    yanq@lzu.edu.cn
    }

\abstract{%
    Recently, Chmutov proved that the partial-dual polynomial considered as a function on chord diagrams satisfies the four-term relations. In this paper, we show that this function on framed chord diagrams also satisfies the four-term relations, i.e., is a framed weight system.
    }

\keywords{
    Ribbon graph, partial duality, chord diagram, weight system
    }

\msc{
    05C10, 05C65, 57M15, 57Q15
    }

\VOLUME{31}
\YEAR{2023}
\ISSUE{3}
\firstpage{151}
\DOI{https://doi.org/10.46298/cm.13020}

\begin{document}

\section{Introduction}
Chmutov \cite{CG} introduced a far-reaching generalisation of geometric duality, called partial duality, and together with other partial dualities, it has received ever-increasing attention, and their applications span topological graph theory, knot theory, matroids/delta-matroids, and physics. In analogy with the extensively studied polynomial in topological graph theory that enumerates by Euler genus all embeddings of a given graph, Gross, Mansour and Tucker \cite{GMT} introduced the partial-dual polynomials for arbitrary ribbon graphs. Comprehensive surveys of the partial-dual polynomial may be found in \cite{CHVT, GMT, GMT2, YJ1, YJ2, YJ3}.

A chord diagram can be interpreted as an orientable ribbon graph with a single vertex. Chmutov \cite{CHS} proved that the partial-dual polynomial considered as a function on chord diagrams satisfies the four-term relations, thus it is a weight system from the theory of Vassiliev knot invariants. The main combinatorial object in our study is framed chord diagrams, i.e., chord diagrams with chords of two types. In this paper, we show that the partial-dual polynomial considered as a function on framed chord diagrams also satisfies the four-term relations, i.e., is a framed weight system.

\section{Framed chord diagrams}

\begin{definition}[\cite{IDP, SKL}]
A chord diagram is a cubic graph consisting of a selected oriented Hamiltonian cycle (the core circle) and several non-oriented edges (chords) connecting points on the core circle.  A chord diagram is framed if a map (a framing) from the set of chords to $\mathbb{Z}/ 2\mathbb{Z}$ is given, i.e., every chord is endowed with 0 or 1. The chords with framing 0 are said to be orientable, and those with framing 1 are said to be disorienting or twisted.
\end{definition}

We consider all framed chord diagrams up to orientation and framing preserving isomorphisms of graphs taking one core circle to the other one. We shall usually omit the orientation of the core circles in chord diagrams, assuming that it is oriented counterclockwise. For ease of description, chords having framing 0 are drawn in solid curves, and those having framing 1 are drawn in dashed ones. An example is given in Figure \ref{p06}.

In this paper, we will also use framed chord diagrams on several circles. Analogously, a framed chord diagram on $n$ circles is a cubic graph consisting of $n$ oriented disjoint circles (the core circles) and several edges (solid or dashed chords) connecting points on the core circles. Let $M^f$ be the free $\mathbb{Z}$-module generated by all framed chord diagrams. Each element of $M^f$ is a finite linear combination of framed chord diagrams with integer coefficients.

\begin{figure}[htbp!]
  \centering
  \includegraphics[width=14cm]{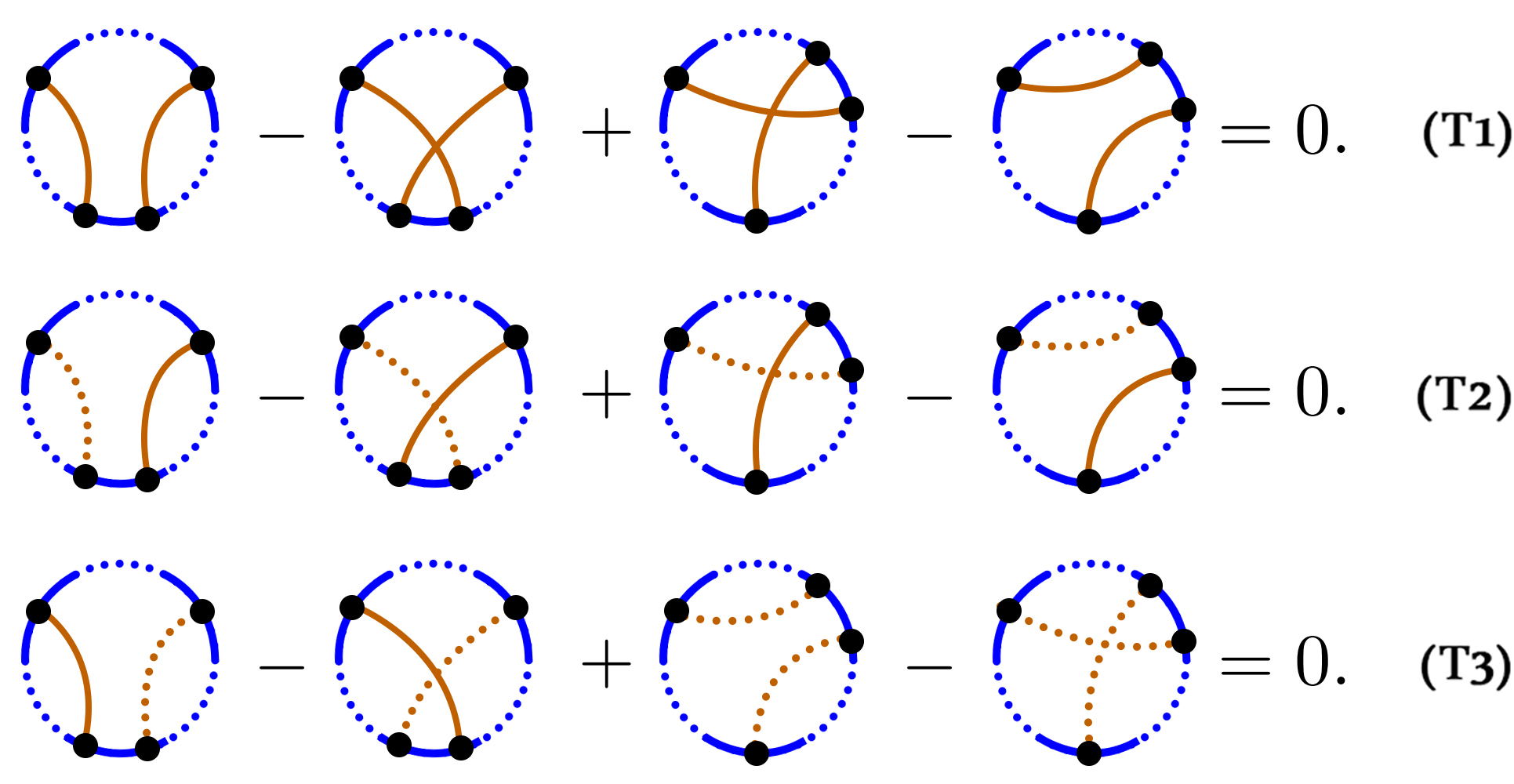}
  \caption{Four-term relations for framed chord diagrams}\label{p01}
\end{figure}

\begin{definition}[\cite{IDP, SKL}]
The module $\mathcal{M}^f$ of framed chord diagrams is the quotient module of $M^f$ modulo the relations shown in Figure \ref{p01}, called four-term relations.
\end{definition}

\begin{remark}
As usual, in these relations, as shown in Figure \ref{p01}, it is assumed that, apart from the chords shown, all framed chord diagrams in the relations may contain some other chords, which
connect points in the dashed parts of the core circle and are the same for all the four diagrams. For more detailed discussions of the framed chord diagrams and four-term relations, see \cite{IDP, MKA, SKL}.
\end{remark}

Suppose that each chord consists of two half-chords in a framed chord diagram. A signed rotation of a framed chord diagram is a cyclic order of the half-chords at the core circle induced by the orientation at that core circle, and if the chord is orientable, then we give the same sign ($+$ or $-$) to both two half-chords and give the different signs (one $+$, the other $-$) otherwise. The sign $+$ is always omitted. See Figure \ref{p06} for an example. Reversing the signs of both half-chords with a given chord yields an equivalent signed rotation. Note that the signed rotation of a framed chord diagram is independent of the choice of the first alphabet. Conversely, given a signed rotation, a framed chord diagram results from creating a circle, giving the circle a counterclockwise orientation, and labelling the cyclic order of the half-chords to the circle. We connect a solid curve for the same alphabet and sign, and a dashed curve for the same alphabet and different signs. Let $P=(p_1, p_2, \cdots, p_k)$  be a string, i.e., a finite sequence of letters from some finite alphabet. The inverse of $P$ is the string $P^{-1}=(-p_k, \cdots, -p_2, -p_1)$.
\begin{figure}[htbp!]
  \centering
  \includegraphics[width=15cm]{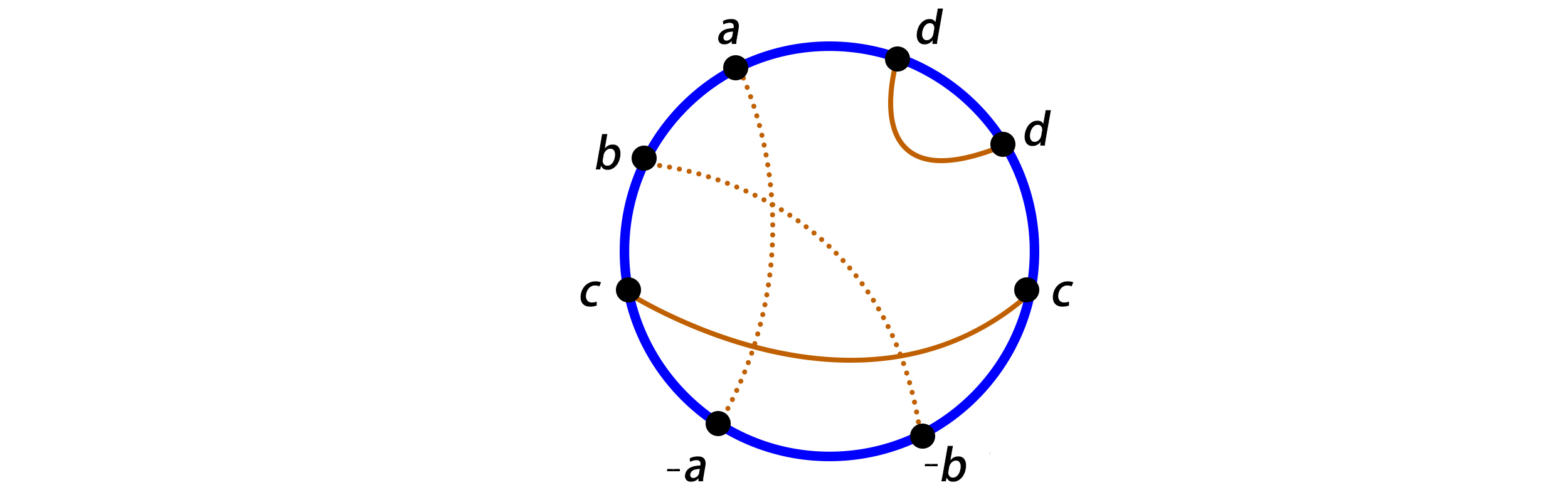}\\
  \caption{A framed chord diagram for $(a, b, c, -a, -b, c, d, d)$}\label{p06}
\end{figure}

\section{Ribbon graphs and partial-duals }
\begin{definition}[\cite{BR}]
A {\it ribbon graph} $G$ is a (orientable or non-orientable) surface with boundary,
represented as the union of two sets of topological discs, a set $V(G)$ of vertices, and a set $E(G)$ of edges,
subject to the following restrictions.
\begin{enumerate}
\item[(1)] the vertices and edges intersect in disjoint line segments, we call them {\it common line segments} as in \cite{Metrose};
\item[(2)] each of such common line segments lies on the boundary of precisely one vertex and precisely one edge;
\item[(3)] Every edge contains exactly two such common line segments.
\end{enumerate}
\end{definition}

The genus of a ribbon graph $G$, denoted by $\gamma(G)$, is its genus when viewed as a punctured surface. The Euler genus $\varepsilon(G)$ of a ribbon graph $G$ is defined as follows. If $G$ is a connected, then
\begin{eqnarray*}
\varepsilon(G)=\left\{\begin{array}{ll}
                      2\gamma(G), & \mbox{if}~G~\mbox{is orientable,}\\
                       ~~\gamma(G), & \mbox{if}~G~\mbox{is non-orientable.}
                   \end{array}\right.
\end{eqnarray*}
If $G$ is not connected, then $\varepsilon(G)$ is defined as the sum of the values of $\gamma(G_i)$ of all components $G_i$ of $G$.

For a ribbon graph $G$ and a subset $A$ of its edge-ribbons $E(G)$,  the \emph{partial dual} \cite{CG} $G^A$  of $G$ concerning $A$ is the ribbon graph obtained from $G$ by glueing a disc to $G$ along each boundary component of the spanning ribbon subgraph $(V (G), A)$ (such discs will be the vertex-discs of $G^A$), removing the interiors of all vertex-discs of $G$ and keeping the edge-ribbons unchanged.

\begin{table}[htbp!]
\caption{The partial dual of an edge of a ribbon graph}
  \centering
  \includegraphics[width=16cm]{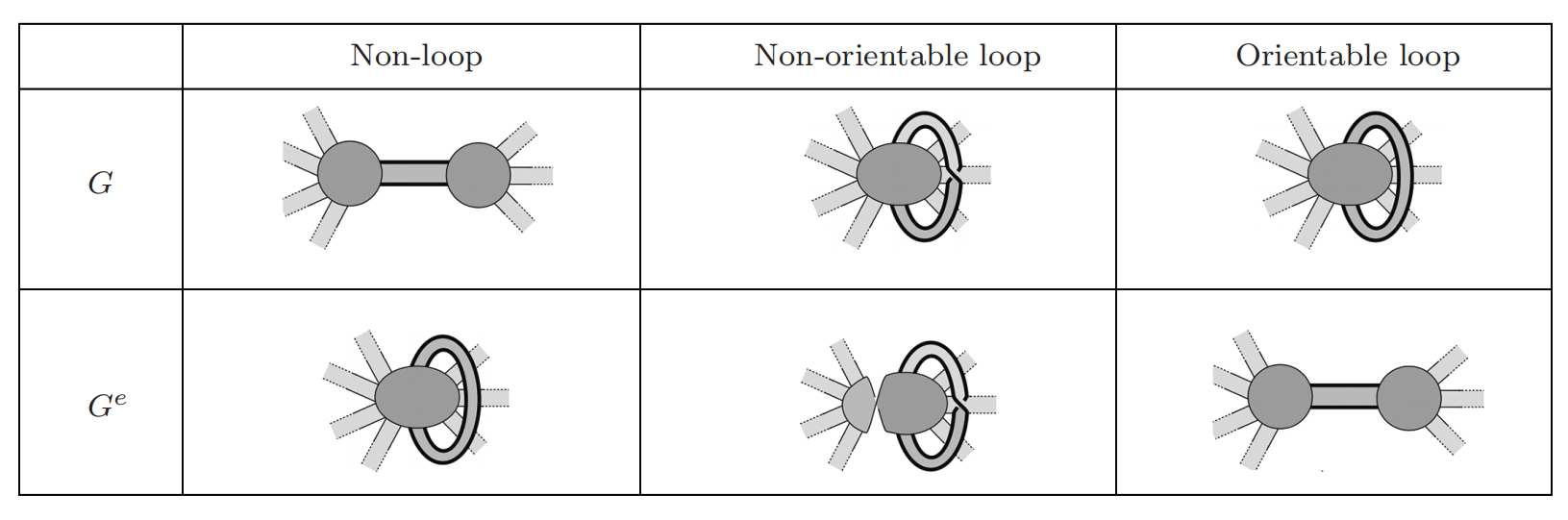}\\

  \label{p09}
\end{table}

The local effect of forming the partial dual with respect to an edge of a ribbon graph is shown in Table 1 \cite{EMO}. It is easy to check that $(G^e)^f=(G^f)^e$ for $e, f\in E(G)$ and $(G^{e})^e=G$. Then the non-loop and the orientable loop cases are reciprocal. Furthermore, for $A\subseteq E(G)$, $G^A$ can also be the ribbon graph obtained from $G$ by forming the partial dual with respect to each edge of $A$ in any order.

\begin{definition}[\cite{GMT}]\label{def-1}
The \emph{partial-dual polynomial} of any ribbon graph $G$ is defined to be the generating function $$^{\partial}\varepsilon_{G}(z)=\sum_{A\subseteq E(G)}z^{\varepsilon(G^{A})}$$
that enumerates all partial duals of $G$ by Euler genus.
\end{definition}

In the following, we define analogues of partial duals and partial-dual polynomials for framed chord diagrams. With each framed chord diagram $D$ on $n$ circles, we can associate a ribbon graph of $D$, denoted by $R(D)$, by attaching $n$ discs to the circles of $D$ and thickening the solid and dashed chords of $D$ to regular bands and half-twist bands, respectively. Note that the structure of the ribbon graph corresponding to a framed chord diagram does not depend on the orientation of the circles of the diagram. An example of framed chord diagrams and their corresponding ribbon graphs are given in Figure \ref{p14}.

\begin{figure}[htbp!]
  \centering
  \includegraphics[width=16cm]{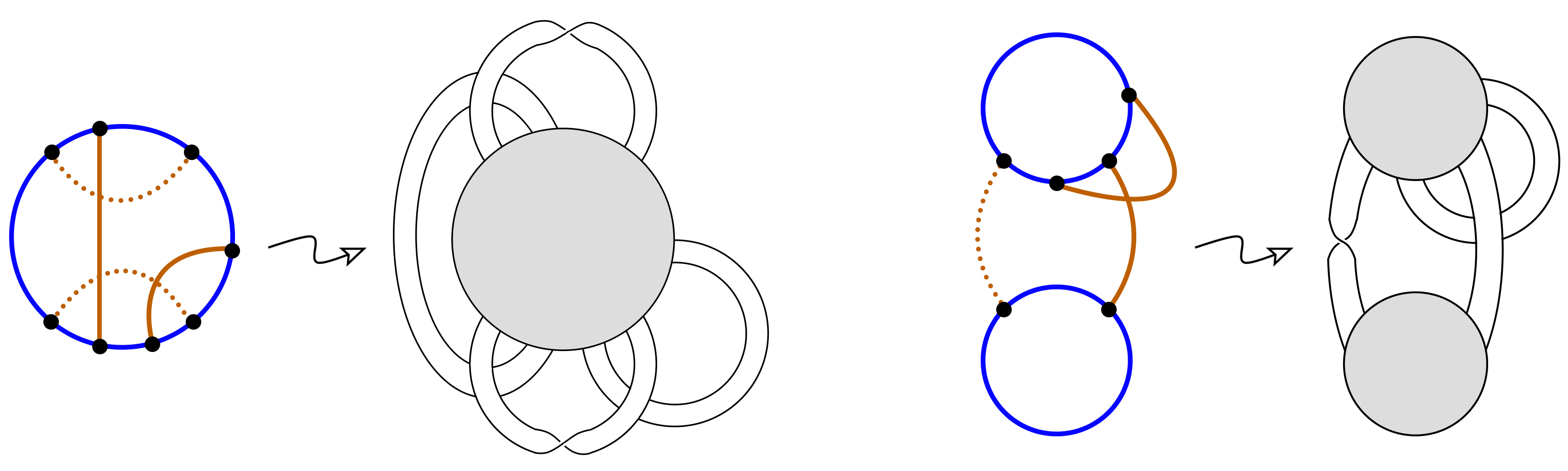}\\
  \caption{Framed chord diagrams and their corresponding ribbon graphs}
  \label{p14}
\end{figure}

\begin{remark}\label{re01}
In general, a framed chord diagram and its mirror reflection are different, but their corresponding ribbon graphs are equivalent. Because ribbon graphs are considered up to equivalence under vertex flips, i.e., reversing the cyclic order at a vertex and simultaneously giving the half-twists on the incident edges (twice in the case of a loop).
\end{remark}

\begin{lemma}[\cite{CHS}]\label{le02}
Sliding one edge-ribbon of a ribbon graph along another one results in a homeomorphic surface. Thus they have equal genera.
\end{lemma}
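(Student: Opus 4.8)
The plan is to realize the slide as an honest homeomorphism of the underlying surface, from which the equality of genera is immediate, since genus (and hence Euler genus $\varepsilon$) is a homeomorphism invariant of a compact surface with boundary. First I would pin down precisely what ``sliding $b$ along $a$'' means at the level of the two sets of discs making up the ribbon graph: one of the two common line segments (feet) of the edge-ribbon $b$ is dragged along the boundary of the vertex-disc to which it is attached, carried across the foot of $a$, pushed along the band of $a$ to the vertex-disc at its far end, and reattached there. In the language of handle decompositions this is exactly a handle slide of the $1$-handle $b$ over the $1$-handle $a$, the vertex-discs playing the role of $0$-handles.

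The key observation is that the whole operation is supported in a regular neighbourhood $N$ of $a\cup b$ together with the incident vertex-discs; outside $N$ the original ribbon graph and its slid version agree pointwise, so any further chords connecting the untouched parts of the boundary are irrelevant to the argument. Hence it suffices to exhibit a homeomorphism from $N$ to its slid version $N'$ that restricts to the identity on the arcs of $\partial N$ along which $N$ is glued to the rest of the surface; such a homeomorphism then extends by the identity to the whole surface and establishes that the two surfaces are homeomorphic.

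To build the homeomorphism on $N$, I would argue that $N$ and $N'$ are the same abstract surface-with-boundary, the slide being a finger-move of the attaching region of $b$ that sweeps it across the band $a$ but never alters the cyclic order of the surviving attaching data along the glueing arcs. Concretely, one isotopes the dragged foot of $b$ continuously from its initial to its final position; this ambient isotopy of $N$ is the desired homeomorphism and is the identity near the glueing arcs. Alternatively, one may simply invoke the standard fact from handle theory that sliding one $1$-handle over another yields a diffeomorphic manifold.

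The main obstacle is the bookkeeping of half-twists and orientations. When $b$ is slid across a twisted (half-twist) band, or when $b$ itself carries a half-twist, the reattached foot may arrive with its orientation reversed, so one must check that the local homeomorphism can be chosen to absorb this reversal without changing the surface type; this is exactly where a careful picture is needed and where one confirms that the slide neither creates nor destroys genus. Once the homeomorphism is in hand, $\varepsilon(G)$ is unchanged, because Euler genus is determined by the Euler characteristic and the orientability class, both of which depend only on the homeomorphism type of the surface, giving the stated equality of genera.
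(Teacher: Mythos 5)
Your handle-slide argument is correct and is essentially the same justification the paper intends: the lemma is quoted from Chmutov \cite{CHS} without a written proof (only an illustrative figure), and the standard proof there is exactly the observation that sliding the foot of one edge-ribbon over another is an ambient isotopy of the attaching data within the same surface, hence a homeomorphism, so Euler genus is preserved. Your extra care about half-twists and about the operation being supported in a neighbourhood of the two bands is consistent with, and slightly more explicit than, the cited treatment.
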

\begin{figure}[htbp!]
  \centering
  \includegraphics[width=12cm]{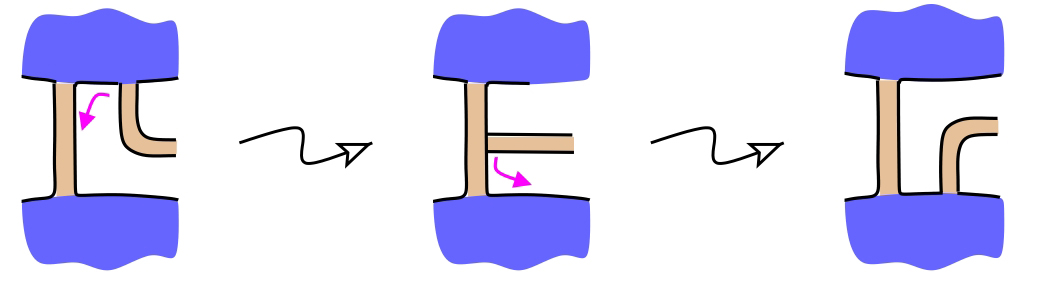}\\
\end{figure}

It is not difficult to interpret sliding and flipping operations on a framed chord diagram $D$. Some examples are given in Figure \ref{p07}.
\begin{figure}[htbp!]
  \centering
  \includegraphics[width=16cm]{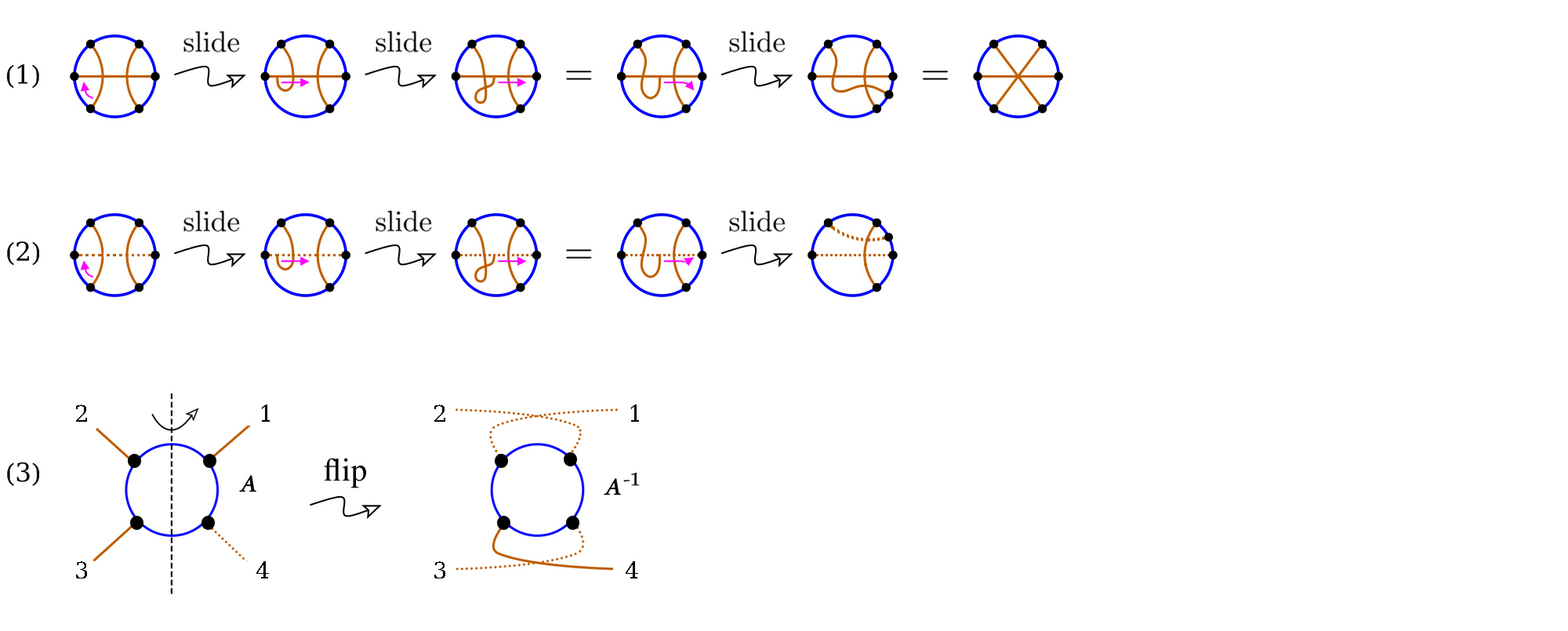}\\
  \caption{An example of sliding and flipping operations}
  \label{p07}
\end{figure}

By Lemma \ref{le02} and Remark \ref{re01}, we see that the following lemma holds:

\begin{lemma}\label{le03}
Let $D$ be a framed chord diagram. A sliding or a flipping operation on $D$ does not change the Euler genus of the ribbon graph $R(D)$.
\end{lemma}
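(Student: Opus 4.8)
The plan is to reduce the statement directly to Lemma~\ref{le02} and Remark~\ref{re01} by fixing a precise dictionary between the two combinatorial moves on the framed chord diagram $D$ and the corresponding topological operations on the associated ribbon graph $R(D)$. Recall that $R(D)$ is obtained by attaching a disc to each core circle and thickening each solid (resp. dashed) chord into an untwisted (resp. half-twisted) band. First I would make explicit that, under this correspondence, a sliding operation on $D$ is exactly the operation of sliding one end of an edge-ribbon of $R(D)$ along a common line segment of a neighbouring edge-ribbon, and that a flipping operation on $D$ is exactly a vertex flip of $R(D)$ in the sense of Remark~\ref{re01}.

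Granting the dictionary, the sliding case is immediate: if $D'$ is obtained from $D$ by a sliding move, then $R(D')$ is obtained from $R(D)$ by sliding one edge-ribbon along another, so by Lemma~\ref{le02} the underlying surfaces are homeomorphic. A homeomorphism preserves both the genus and the orientability type, and the Euler genus $\varepsilon$ is determined by these two data; hence $\varepsilon(R(D'))=\varepsilon(R(D))$.

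The flipping case is handled by Remark~\ref{re01}: performing a flip on a core circle of $D$ reverses the cyclic order of the half-chords incident to that circle while installing the half-twists on the incident bands prescribed there (a single half-twist on each incident non-loop band, and two half-twists on a band both of whose ends meet the flipped circle). Since ribbon graphs are regarded up to equivalence under exactly these vertex flips, $R(D')$ and $R(D)$ are equivalent ribbon graphs and therefore define the same surface; in particular they share the same Euler genus.

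The step that requires the most care---and the only genuine obstacle---is verifying the dictionary itself, specifically the framing bookkeeping. One must check that the sign changes recorded in the signed rotation of $D$ under a flip (read off from Figure~\ref{p07}) match, half-twist for half-twist, the half-twists that Remark~\ref{re01} attaches to the incident edge-ribbons, with the loop case contributing the required doubled half-twist. A parallel check is needed for sliding: one must confirm that pushing a chord-end past a neighbouring half-chord in $D$ transports the correct parity of twisting, so that the move on $D$ genuinely lifts to the ribbon-graph slide of Lemma~\ref{le02} rather than to a move that alters the framing. Once this combinatorial-to-topological translation is pinned down against Table~1 and Figure~\ref{p07}, both invariance claims follow with no further computation.
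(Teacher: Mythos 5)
Your proposal is correct and follows exactly the route the paper takes: the paper derives Lemma~\ref{le03} in one line from Lemma~\ref{le02} (sliding of edge-ribbons preserves the surface) and Remark~\ref{re01} (invariance under vertex flips), which is precisely your reduction. Your additional care in checking the dictionary between the diagrammatic moves and the ribbon-graph operations (via Figure~\ref{p07} and Table~1) only makes explicit what the paper leaves implicit.
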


In order to describe partial duality in the language of framed chord diagrams, we need to encode multi-vertex ribbon graphs by framed chord diagrams as well. This can be done using framed chord diagrams on several circles. Since any chord may be connected by a solid or dashed chord to one or two circles, the local effect of forming the partial dual with respect to a chord of a framed chord diagram on several circles has four cases as shown in Figure \ref{p08}.
\begin{figure}[ht!]
  \centering
  \includegraphics[width=16cm]{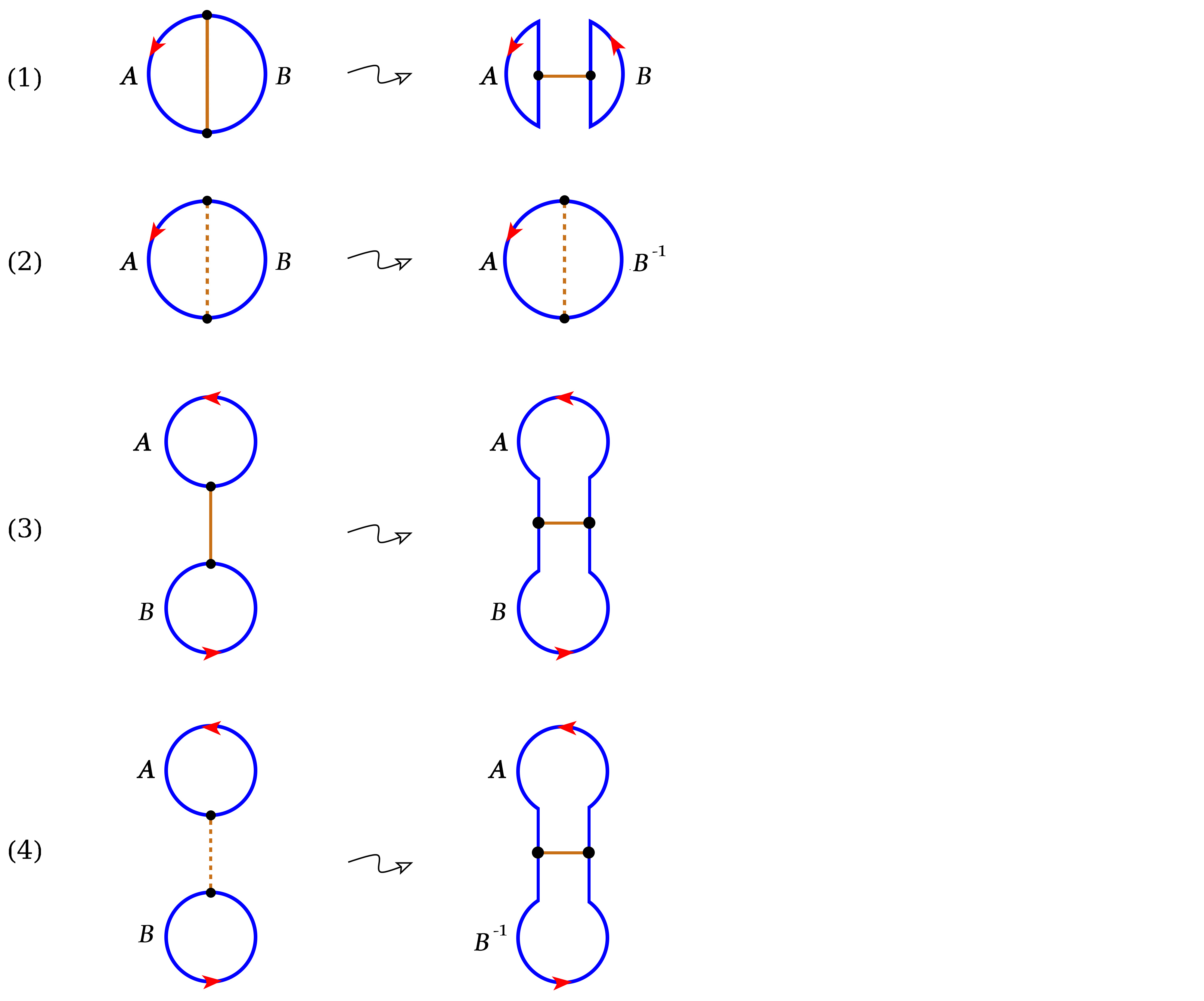}\\
  \caption{The partial dual of a chord of a framed chord diagram}
  \label{p08}
\end{figure}

\begin{remark}
The partial dual of a chord of a framed chord diagram is illustrated locally at a chord in Figure \ref{p08}. Moreover, cases 1 and 3 are reciprocal.
\end{remark}

\begin{definition}
The \emph{partial-dual polynomial} of any framed chord diagram $D$ is defined  to be $^{\partial}\varepsilon_{R(D)}(z).$
\end{definition}

\section{Main Result}
\begin{theorem}[\cite{CHS}\label{th01}]
The partial-dual polynomial as a function on chord diagrams satisfies equation $(T_1)$ of the four-term relations.
\end{theorem}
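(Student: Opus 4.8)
The plan is to verify relation $(T_1)$ directly from the definition, showing that
$${}^{\partial}\varepsilon_{R(D_1)}(z)-{}^{\partial}\varepsilon_{R(D_2)}(z)-{}^{\partial}\varepsilon_{R(D_3)}(z)+{}^{\partial}\varepsilon_{R(D_4)}(z)=0,$$
where $D_1,D_2,D_3,D_4$ are the four chord diagrams of $(T_1)$; they coincide outside a local disk, carry a common family of chords whose endpoints lie in the unchanged (dashed) parts of the core circle, and differ only in the placement of two distinguished solid chords. Since a chord diagram is a one-vertex orientable ribbon graph whose edges are its chords, I would first fix a canonical identification of the edge sets $E(D_1)\cong E(D_2)\cong E(D_3)\cong E(D_4)$, so that a single subset $A\subseteq E$ is meaningful for all four diagrams. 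Expanding ${}^{\partial}\varepsilon_{R(D_i)}=\sum_{A\subseteq E}z^{\varepsilon(R(D_i)^A)}$, the relation reduces to the vanishing of the signed sum $\sum_{A}\big(z^{\varepsilon(R(D_1)^A)}-z^{\varepsilon(R(D_2)^A)}-z^{\varepsilon(R(D_3)^A)}+z^{\varepsilon(R(D_4)^A)}\big)$.

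Next I would organise the argument around the observation that, up to the sign pattern, $D_2$ is obtained from $D_1$ and $D_4$ from $D_3$ by one and the same sliding move of a chord endpoint along a distinguished chord $c$. I would therefore partition the subsets $A\subseteq E$ according to whether $c\in A$. For the subsets with $c\notin A$, the sliding move is disjoint from the chords being dualised, so it commutes with the formation of $R(\cdot)^A$; by Lemma \ref{le03} (sliding preserves the Euler genus of the associated ribbon graph) the exponents satisfy $\varepsilon(R(D_1)^A)=\varepsilon(R(D_2)^A)$ and $\varepsilon(R(D_3)^A)=\varepsilon(R(D_4)^A)$, so these contributions cancel in the pairs $(+,-)$ and $(-,+)$.

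It then remains to treat the subsets with $c\in A$. Here I would form the partial dual with respect to $c$ first, which is legitimate because $R(D_i)^A$ does not depend on the order in which the chords of $A$ are dualised. Dualising $c$ converts the sliding move relating $D_1$ to $D_2$ (respectively $D_3$ to $D_4$) into the local transformation of Figure \ref{p08}, and I would set up a bijection between the index sets $\{A:c\in A\}$ for the pair $(D_1,D_2)$ and for the pair $(D_3,D_4)$ under which, using the sliding and flipping operations of Figure \ref{p07} together with Lemma \ref{le03}, the surviving exponents are matched. Feeding this bijection into the signed sum cancels the remaining contributions, completing the verification of $(T_1)$.

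The step I expect to be the main obstacle is the case $c\in A$. Forming the partial dual with respect to $c$ may, according to the four local pictures of Figure \ref{p08}, either merge two core circles into one or split one core circle into two, so the residual diagrams attached to the two pairs are not literally identical and the bijection must be built with care. Concretely, the delicate point is to show that after dualising $c$ the rearrangement prescribed by $(T_1)$ is realised by a genuine sliding or flipping operation on the residual framed chord diagram, rather than merely an abstract homeomorphism of surfaces, so that Lemma \ref{le03} applies and the matched exponents are actually equal. Verifying this in each of the circle-merging and circle-splitting cases is where the bulk of the work will lie.
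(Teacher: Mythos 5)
There is a genuine gap, and it sits in the case you expected to be easy rather than the one you flagged. (Note first that the paper does not prove this statement itself --- it imports it from \cite{CHS} --- but the method of \cite{CHS} is exactly the one the paper replicates for (T$_2$) and (T$_3$): form all $16$ partial duals of the four diagrams with respect to \emph{both} distinguished chords and pair them column by column.) Your partition of the subsets $A$ according to whether the fixed chord $c$ lies in $A$ is too coarse: the correct pairing of the four diagrams depends on whether an even or an odd number of the two distinguished chords (the fixed chord $c$ and the moving chord $m$) lies in $A$. When $c\notin A$ but $m\in A$, the sliding move is not disjoint from the chords being dualised --- it is precisely an endpoint of $m$ that is being slid --- and the slide-related pairs do not have equal Euler genus in general. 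Concretely, one has $\varepsilon(G^{A})=2c(G)+e(G)-b(V,A)-b(V,E\setminus A)$, where $b(V,B)$ denotes the number of boundary components of the spanning ribbon subgraph with edge set $B$. Take $c$ with endpoints $p,q$, one further chord $k$ interleaved with $c$ and with both endpoints in the unchanged parts of the circle, and let the free endpoint $x$ of $m$ occupy the four slots adjacent to $p$ and $q$. For $A=\{m,k\}$ one computes $b(V,A)=3$ when $x$ sits in a slot at $p$ (the loops $m,k$ are then non-interleaved) and $b(V,A)=1$ when $x$ sits in a slot at $q$ (they are interleaved), while $b(V,E\setminus A)=b(V,\{c\})=2$ for all four positions. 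A slide of $m$ along $c$ carries a slot at $p$ to a slot at $q$, so the two diagrams in each slide pair have Euler genera $0$ and $2$ for this $A$ and their contributions do not cancel. For such $A$ the cancellation is instead between the two positions adjacent to the \emph{same} endpoint of $c$, which also carry opposite signs in the relation.

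This is exactly why the published argument uses a pairing that changes from column to column: the slide-related pairs are matched when neither or both of the two distinguished chords are dualised, while the ``adjacent'' pairs are matched when exactly one of them is, and the latter matchings are realised by different (and sometimes longer) sequences of slides and flips performed after dualising, as in the pairs $(G_{1;01},G_{2;01})$ and $(G_{1;11},G_{4;11})$ of the paper's proof of (T$_2$). A secondary issue: in your case $c\in A$ you propose a bijection between the index sets of the pair $(D_1,D_2)$ and of the pair $(D_3,D_4)$, i.e.\ a matching of terms attached to possibly different subsets; in the actual proof every cancellation is between two of the four diagrams evaluated at the \emph{same} subset $A$, which is what makes Lemma \ref{le03} directly applicable. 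Your overall strategy --- expand over $A$, dualise the distinguished chords first, and cancel exponents via genus-preserving slides and flips --- is the right one, but the pairing must be allowed to depend on $A\cap\{c,m\}$, and each of the four resulting cases must be checked separately.
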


\begin{theorem}
The partial-dual polynomial as a function on framed chord diagrams satisfies the four-term relations.
\end{theorem}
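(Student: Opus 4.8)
The plan is to extend Chmutov's argument for Theorem~\ref{th01}. The relation $(T_1)$, in which the two distinguished chords are solid, is already available, so I would concentrate on the remaining relations in Figure~\ref{p01}, namely those in which at least one of the two distinguished chords is dashed. For each such relation I would record it as an alternating sum of four framed chord diagrams $D_1,D_2,D_3,D_4$ that are identical outside a neighbourhood of two chords: a pivot chord $a$, across whose two endpoints the endpoint of a second chord $b$ is moved, and $b$ itself. The target identity is then
\[
{}^{\partial}\varepsilon_{R(D_1)}(z)-{}^{\partial}\varepsilon_{R(D_2)}(z)-{}^{\partial}\varepsilon_{R(D_3)}(z)+{}^{\partial}\varepsilon_{R(D_4)}(z)=0 .
\]

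First I would expand each polynomial through its definition ${}^{\partial}\varepsilon_{R(D_i)}(z)=\sum_{A}z^{\varepsilon(R(D_i)^A)}$, the sum running over subsets $A$ of the common chord set, and then regroup the resulting quadruple sum according to whether the pivot chord $a$ lies in $A$. Within each block I would apply the local description of partial duality for framed chord diagrams --- the four cases of Figure~\ref{p08}, according as the relevant chord is solid or dashed and joins one or two circles --- to rewrite each $R(D_i)^A$ so that consecutive diagrams of the relation differ only by the position of a single band-end. The decisive step is to produce, for every fixed $A$, a correspondence that matches the four terms in pairs of opposite sign: moving the band-end of $b$ across an endpoint of $a$ should be realised, after partial-dualising with respect to $A$, as a composition of the sliding and flipping operations. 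Since these operations preserve the Euler genus by Lemma~\ref{le03}, the matched exponents $\varepsilon(R(D_i)^A)$ coincide, and the alternating sum collapses. I expect the two blocks ($a\in A$ and $a\notin A$) to furnish complementary correspondences, very likely paired by toggling the membership of $a$ in $A$, whose combination annihilates the whole sum.

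The main obstacle will be constructing this correspondence while keeping track of the framings, which is exactly the new feature absent from Theorem~\ref{th01}. When $b$ is dashed, or when its moving endpoint crosses an endpoint of a dashed pivot chord $a$, the local operation on $R(D_i)$ manipulates a half-twisted band, so each candidate slide or flip must be verified to respect both framings and the one-circle/two-circle distinction of Figure~\ref{p08}. Concretely, for every framing case I would check that partial-dualising with respect to $A$ converts the elementary move relating two successive diagrams into a genuine slide or flip, and that the framing changes forced by a half-twisted band are precisely those built into the corresponding relation of Figure~\ref{p01}. Once this case analysis is complete, the genus-invariance of Lemma~\ref{le03} delivers the pairwise cancellation in each block, and assembling the blocks yields the vanishing of the alternating sum, that is, the four-term relations.
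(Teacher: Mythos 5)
Your strategy coincides with the paper's: since the partial duals over the chords not shown in the relation contribute identically to all four diagrams, the problem reduces to the $4\times 4=16$ partial duals taken with respect to the two distinguished chords, which are then cancelled in pairs of equal Euler genus by exhibiting sliding and flipping moves and invoking Lemma~\ref{le03}. The one substantive shortfall is that you leave the decisive step---the explicit pairing of the sixteen diagrams and the explicit slide/flip sequences---as something ``to be checked,'' whereas that case analysis \emph{is} the proof: the paper supplies it in Figure~\ref{p04} (eight coloured pairs for (T$_2$)), in two displayed slide sequences for the pairs $(G_{1;01},G_{2;01})$ and $(G_{1;11},G_{4;11})$, and in Figure~\ref{p05} for (T$_3$). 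One correction to your guess about the combinatorial structure: the cancelling partner of a term is always the dual of another of the four diagrams over the \emph{same} subset of the two distinguished chords, never a term obtained ``by toggling the membership of $a$ in $A$'' (such a toggle generally changes the Euler genus, e.g.\ dualising a twisted loop changes orientability); moreover, which diagrams get paired---$(G_1,G_4)$ with $(G_2,G_3)$ versus $(G_1,G_2)$ with $(G_3,G_4)$---is governed by whether both/neither or exactly one of the two distinguished chords is dualised, not by membership of the pivot chord alone. With the sixteen local pictures drawn and the slides verified case by case (including the framing bookkeeping you correctly identify as the new difficulty), your outline becomes the paper's argument.
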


\begin{proof}
The first equation (T$_1$) follows directly from Theorem \ref{th01}. We now verify the second equation (T$_2$). Each of the four framed chord diagrams of (T$_2$) contains two specific chords. Forming all possible partial duals relative to these two chords of the four framed chord diagrams of (T$_2$) gives a set of 16 framed chord diagrams in Figure \ref{p04}.
\begin{figure}[ht]
  \centering
  \includegraphics[width=16cm]{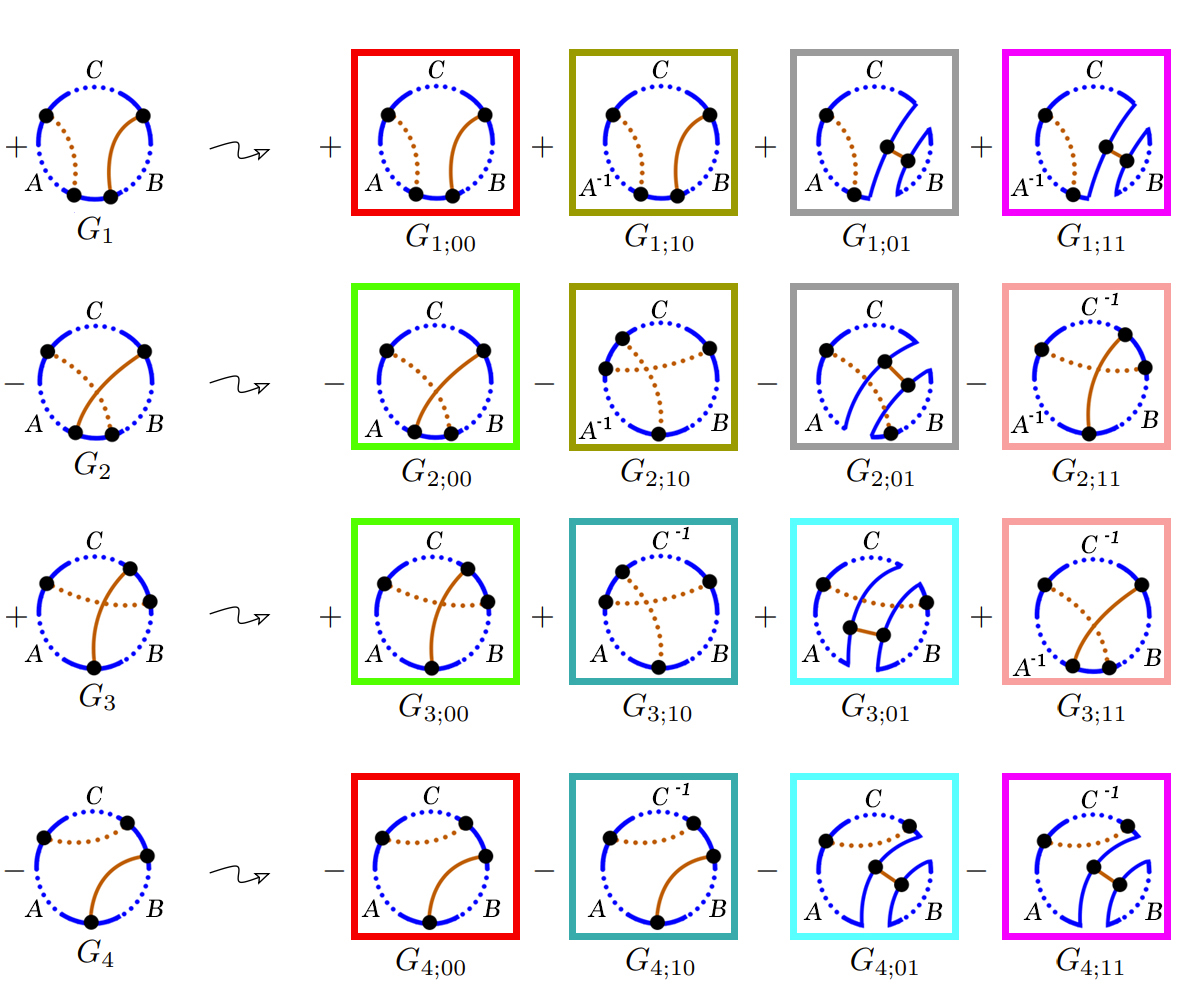}\\
  \caption{The proof of (T$_2$)}\label{p04}
\end{figure}
We partition these framed chord diagrams into 8 pairs by different coloured frames.
For convenience, we denote by $G_{1;10}$ the result of partial duality of the first diagram of (T$_2$) relative to the dashed chord, by $G_{2;01}$ the second diagram of (T$_2$) relative to solid chord, and by $G_{3;11}$ the third diagram of (T$_2$) relative to both chords. A similar convention applies to other framed chord diagrams. Note that the partial duals for the chords not indicated in (T$_2$) are the same for all four framed chord diagrams. By Lemma \ref{le03}, sliding and flipping operations do not change the Euler genera of the surfaces corresponding to the framed chord diagrams, and therefore cancel each other in the partial-dual polynomial. It is sufficient to show that the framed chord diagrams in each pair can be obtained from one to another by a sequence of sliding and flipping operations.

For the framed chord diagrams in the pairs $(G_{1;00}, G_{4;00}), (G_{2;00}, G_{3;00})$ and $(G_{2;11}, G_{3;11})$ one can be obtained from another by sliding the dashed chord along the solid chord. Analogously, $G_{2;10}$ and $G_{3;10}$ can be obtained from $G_{1;10}$ and $G_{4;10}$ by sliding the solid chord along the dashed chord, respectively. Here are the sliding operations for the remaining 3 pairs.

$(G_{1;01}, G_{2;01}):$
\begin{figure}[ht!]
  \centering
  \includegraphics[width=10cm]{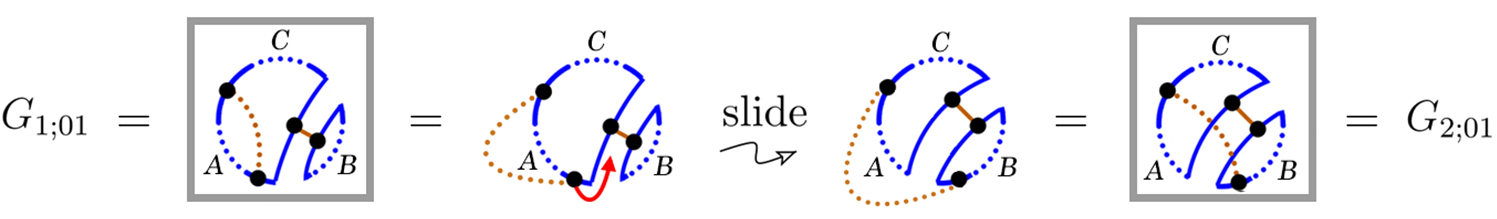}\\
\end{figure}

The sliding in the pair $(G_{3;01}, G_{4;01})$  is very similar. The last pair $(G_{1;11}, G_{4;11})$ of the last column can be done as follows.

$(G_{1;11}, G_{4;11}):$
\begin{figure}[ht!]
  \centering
  \includegraphics[width=10cm]{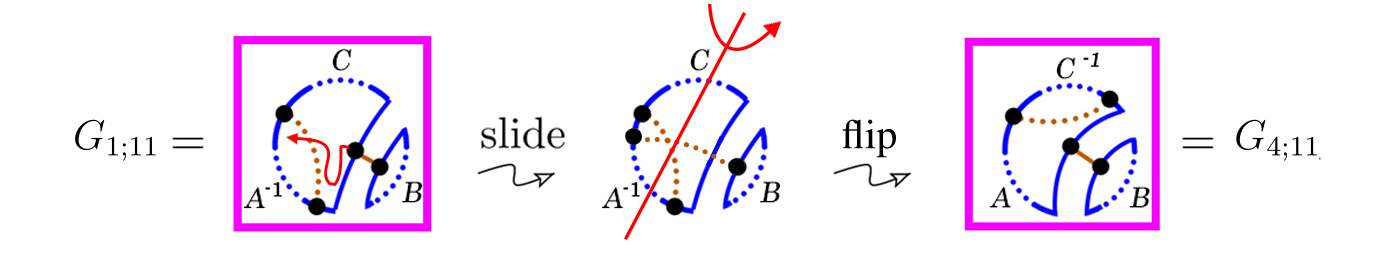}\\
\end{figure}

The same method of proof can therefore be used to establish the equation (T$_3$), as shown in Figure \ref{p05}.
\end{proof}

\begin{figure}[ht!]
  \centering
  \includegraphics[width=10cm]{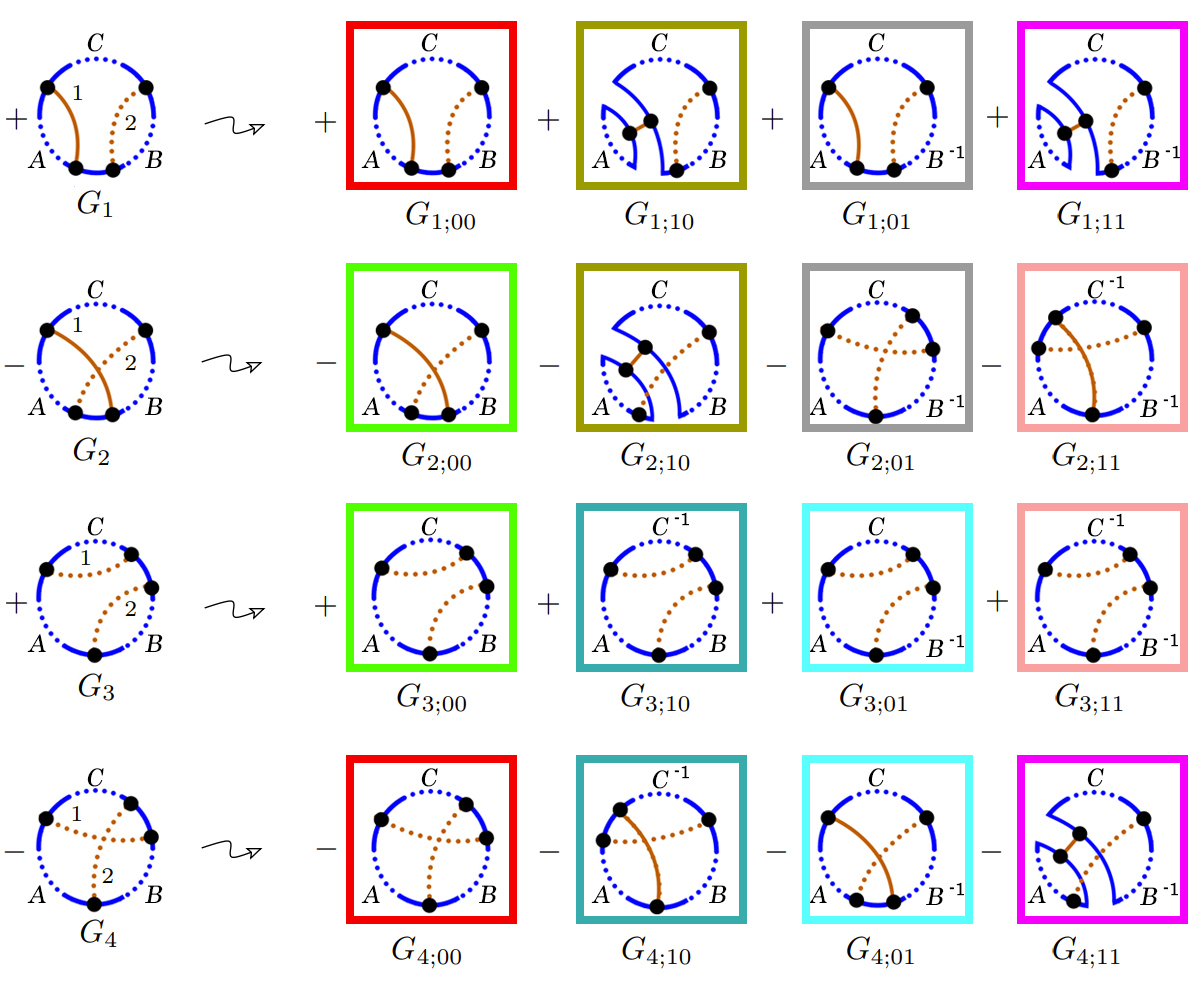}\\
  \caption{The proof of (T$_3$)}\label{p05}
\end{figure}

\subsection*{Acknowledgements}

This work is supported by NSFC (Nos. 12101600, 12171402). This paper was initiated during Xian'an Jin's visit in August 2023 at the National Institute of Education, Nanyang
Technological University. His visit was financially supported by the Ministry of Education, Singapore, under its Academic Research Tier 1 (RG19/22). The work was partially supported by the Natural Science Foundation of Hunan Province, PR China (No. 2022JJ40418), the Excellent Youth Project of Hunan Provincial Department of Education, PR China (No. 23B0117), the China Scholarships Council (No. 202108430063). We thank the National Institute of Education, Nanyang Technological University, where part of this research was performed.

{\small
    
}

\EditInfo{February 8, 2024}{March 6, 2024}{Jacob Mostovoy and Sergei Chmutov}

\end{document}